\theoremstyle{plain}
\newtheorem{theorem}{Теорема}[section]
\newtheorem{lemma}{Лемма}[section]
\newtheorem{proposition}{Утверждение}[section]
\newtheorem{corollary}{Следствие}[section]
\theoremstyle{definition}
\newtheorem{example}{Пример}[section]
\newtheorem{definition}{Определение}[section]
\newtheorem{remark}{Замечание}
\numberwithin{equation}{section}
\begin{document}

УДК~\textbf{517.982.22}

\title[]
{О представляющих и абсолютно представляющих системах подпространств в банаховых пространствах}

\author{И.С.Фещенко}

\maketitle

\begin{abstract}
В работе изучаются свойства
представляющих систем подпространств и абсолютно представляющих систем подпространств
в банаховых пространствах.

\textbf{Ключевые слова:} банахово пространство, представляющая система подпространств, абсолютно представляющая система подпространств.
\end{abstract}

\section{Введение}

Пусть
$X$ "--- линейное нормированное пространство над полем
$\mathbb{K}$ действительных или комплексных чисел,
$X_{k},k\geqslant 1,$ "--- система подпространств $X$ (т.е. замкнутых линейных множеств), которую мы будем обозначать
$S=(X;X_{k},k\geqslant 1).$

\begin{definition}(\cite{KorRSofSubspaces})
Система $S$
называется представляющей системой подпространств (ПСП) в
$X$, если произвольный
$x\in X$ можно представить в виде
$x=\sum_{k=1}^{\infty}x_{k}$, где
$x_{k}\in X_{k},k\geqslant 1$.
\end{definition}

Отметим, что в случае банахова пространства
$X$ в книге \cite{Singer} (определение 15.21) ПСП в $X$ называется псевдоразложением $X$.

\begin{definition}(\cite{KorRSofSubspaces})
Система $S$ называется абсолютно представляющей системой подпространств в $X$, если произвольный
$x\in X$ можно представить в виде
$x=\sum_{k=1}^{\infty}x_{k}$, где
$x_{k}\in X_{k},k\geqslant 1$ и
$\sum_{k=1}^{\infty}\|x_{k}\|<\infty$.
\end{definition}

ПСП и АПСП являются естественным обобщением представляющих и абсолютно представляющих систем
(все $X_{k}$ одномерны)
(см., например, \cite{KorRS}).
Определения ПСП и АПСП можно давать и в более широких классах пространств, чем линейные нормированные
(см.\cite{KorRSofSubspaces}); ПСП и АПСП в различных классах пространств изучались в работах
Ю.Ф. Коробейника, А.В. Абанина, К.А.Михайлова и др. (см., например, \cite{KorRSofSubspaces},\cite{Abanin},\cite{Mihaylov}).
В данной работе мы изучаем ПСП и АПСП в банаховых пространствах.

В параграфе \ref{S:RSofSub} изучаются свойства ПСП в $X$, приведены достаточные условия для того, чтобы
счётная система подпространств была ПСП в $X$, доказана теорема об устойчивости ПСП, а также показана связь
между ПСП в гильбертовом пространстве и проблемой Гальперина.

В параграфе \ref{S:ARSofSub} приводятся различные критерии АПСП, изучаются свойства АПСП в $X$.

\section{Представляющие системы подпространств в банаховых пространствах}\label{S:RSofSub}

\subsection{Некоторые свойства ПСП в банаховых пространствах}\label{SS:PropRSofSub}

Пусть
$X$ "--- банахово пространство,
$S=(X;X_{k},k\geqslant 1)$ "--- система его подпространств.
Введём необходимые обозначения и определения.
Для непустого
$I\subset\mathbb{N}$ определим подпространство
$S^{(I)}$ как замыкание линейной оболочки подпространств
$X_{i},i\in I$. Обозначим $X^{*}$ сопряженное пространство к $X$; для
$\varphi\in X^{*}$ обозначим
$\varphi^{(S,I)}$ сужение
$\varphi$ на
$S^{(I)}$. Ясно, что
$\varphi^{(S,I)}\in (S^{(I)})^{*}$.
Для разбиения
$\pi=\{I_{k}\}$ ($k$ пробегает конечное или счётное число значений) множества
$\mathbb{N}$ определим
\begin{equation*}
F_{1}(S,\pi,\varphi)=\sum_{k}\|\varphi^{(S,I_{k})}\|,\varphi\in X^{*}.
\end{equation*}

Разбиение
$\pi$ множества $\mathbb{N}$ назовём последовательным, если оно имеет один из следующих двух видов:

$(1)$ Множества
$I_{k}=\{n(k-1)+1,n(k-1)+2,\ldots,n(k)\},k\geqslant 1$ для некоторой возрастающей последовательности натуральных чисел
$n(1)<n(2)<\ldots$, $n(0)=0$.

$(2)$
Множества
$I_{k}=\{n(k-1)+1,\ldots,n(k)\},1\leqslant k\leqslant p$,
$I_{p+1}=\{n(p)+1,n(p)+2,\ldots\}$ для некоторой возрастающей последовательности натуральных чисел
$n(1)<\ldots<n(p)$, $n(0)=0$.

\begin{theorem}\label{T:RSinequality}
Пусть
$S$ "--- ПСП в
$X$. Тогда существует
$\varepsilon>0$ такое, что для
произвольного последовательного разбиения $\pi$ множества $\mathbb{N}$
выполнено
\begin{equation}\label{E:RSinequality}
F_{1}(S,\pi,\varphi)\geqslant\varepsilon\|\varphi\|,\,\varphi\in X^{*}.
\end{equation}
\end{theorem}
\begin{proof}
Определим пространство
\begin{equation*}
D_{c}=\{\xi=(x_{1},x_{2},\ldots)\mid x_{k}\in X_{k},k\geqslant 1,\,\sum_{k=1}^{\infty}x_{k}\,\mbox{сходится}\}
\end{equation*}
с нормой
$\|\xi\|=\sup_{k\geqslant 1}\|x_{1}+\ldots+x_{k}\|$.
Легко проверить, что $D_{c}$ банахово.
Определим линейный оператор
$A:D_{c}\to X$ равенством
$A(x_{1},x_{2},\ldots)=\sum_{k=1}^{\infty}x_{k}.$
Тогда $A$ ограничен (более того, $\|A\|\leqslant 1$).
Поскольку
$S$ является ПСП в
$X$, то
$\mathrm{Im}(A)=X$.
Из теоремы про открытое отображение следует существование
числа
$M>0$ такого, что для произвольного
$x\in X$ существует
$\xi=(x_{1},x_{2},\ldots)\in D_{c}$, для которого
$x=A\xi$ и
$\|\xi\|\leqslant M\|x\|$. Тогда
$x=\sum_{k=1}^{\infty}x_{k}$ и для каждого
$k\geqslant 1$
$\|x_{1}+\ldots+x_{k}\|\leqslant M\|x\|$.
Для натуральных
$l\leqslant k$ определим
$x_{l,k}=x_{l}+\ldots+x_{k}$, тогда
$\|x_{l,k}\|\leqslant 2M\|x\|$.
Для последовательного разбиения $\pi$ первого вида и
$\varphi\in X^{*}$ имеем
\begin{align*}
&|\varphi(x)|=|\varphi(\sum_{k=1}^{\infty}x_{n(k-1)+1,n(k)})|\leqslant\sum_{k=1}^{\infty}|\varphi(x_{n(k-1)+1,n(k)})|\leqslant\\
&\leqslant\sum_{k=1}^{\infty}\|\varphi^{(S,I_{k})}\|\|x_{n(k-1)+1,n(k)}\|\leqslant 2M\|x\|F_{1}(S,\pi,\varphi),
\end{align*}
откуда, в силу произвольности
$x\in X$,
$F_{1}(S,\pi,\varphi)\geqslant(1/(2M))\|\varphi\|$. Для последовательного разбиения
$\pi$ второго вида такая же оценка доказывается аналогично.
\end{proof}

Для подмножества
$M\subset X$ обозначим
$M^{\bot}$ множество всех
$\varphi\in X^{*}$ таких, что
$\varphi(x)=0,x\in M$. Напомним (см., например, параграф 15 книги \cite{Singer}), что система подпространств
$G_{k},k\geqslant 1,$ банахова пространства $E$ называется разложением Шаудера
$E$, если для каждого
$x\in E$ существуют и единственны
$x_{k}\in G_{k},k\geqslant 1$, такие, что
$x=\sum_{k=1}^{\infty}x_{k}$.

\begin{theorem}\label{T:RSbiorthogonal}
Пусть
$S$ "--- ПСП в
$X$. Тогда система подпространств
$X_{k}'=\bigcap_{j\neq k}X_{j}^{\bot},k\geqslant 1$
является разложением Шаудера в замыкании своей линейной оболочки.
\end{theorem}

\begin{proof}
Достаточно доказать, что существует
$\varepsilon>0$, такое, что для произвольных натуральных
$n,m$ и произвольных
$\varphi\in\sum_{k=1}^{n}X_{k}',\,\psi\in\sum_{k=n+1}^{n+m}X_{k}'$
выполнено
$\|\varphi+\psi\|\geqslant\varepsilon\|\varphi\|$ (см. теорему 15.5 в \cite{Singer}).

Из теоремы \ref{T:RSinequality} следует, что существует
$\varepsilon>0$ такое, что для произвольного последовательного разбиения
$\pi$ выполнено неравенство
\eqref{E:RSinequality}.
Пусть $\varphi\in\sum_{k=1}^{n}X_{k}',\,\psi\in\sum_{k=n+1}^{n+m}X_{k}'$. Определим разбиение
$\pi$ так:
$I_{1}=\{1,2,\ldots,n\},I_{2}=\{n+1,n+2,\ldots\}$.
Тогда
\begin{equation*}
\|\varphi+\psi\|\geqslant\|(\varphi+\psi)^{(S,I_{1})}\|=\|\varphi^{(S,I_{1})}\|=\|\varphi^{(S,I_{1})}\|+\|\varphi^{(S,I_{2})}\|\geqslant\varepsilon
\|\varphi\|,
\end{equation*}
откуда следует нужное утверждение.
\end{proof}

Будем говорить, что система подпространств
$S$ является перестановочной ПСП (ППСП) в $X$, если для произвольной биекции $\sigma:\mathbb{N}\to \mathbb{N}$
система подпространств
$S_{\sigma}=(X;X_{\sigma(k)},k\geqslant 1)$ является ПСП в
$X$. Напомним (см., например, с.534 в \cite{Singer}), что разложение Шаудера
$G_{k},k\geqslant 1$ банахова пространства $E$
называется безусловным, если каждый сходящийся ряд вида
$\sum_{k=1}^{\infty}x_{k}$,
$x_{k}\in G_{k},k\geqslant 1,$ сходится безусловно.

\begin{theorem}
Пусть
$S$ "--- ППСП в
$X$. Тогда система подпространств
$X_{k}'=\bigcap_{j\neq k}X_{j}^{\bot},k\geqslant 1$
является безусловным разложением Шаудера в замыкании своей линейной оболочки.
\end{theorem}
\begin{proof}
Воспользуемся следующим утверждением (см. теорему 15.18 в \cite{Singer}):
если
$G_{k},k\geqslant 1,$ "--- система подпространств банахова пространства
$E$, причём замыкание линейной оболочки $G_{k},k\geqslant 1,$ равно
$E$, то $G_{k},k\geqslant 1,$ является безусловным разложением Шаудера
$E$ тогда и только тогда, когда для произвольной биекции
$\sigma:\mathbb{N}\to\mathbb{N}$ система подпространств
$G_{\sigma(k)},k\geqslant 1,$ является разложением Шаудера
$E$. Теперь из теоремы
\ref{T:RSbiorthogonal} следует нужное утверждение.
\end{proof}

\subsection{Достаточное условие для того, чтобы система подпространств была ПСП в $X$}

Пусть
$X$ "--- линейное нормированное пространство,
$S=(X;X_{k},k\geqslant 1)$ "--- система его подпространств.
Для множества
$F\subset X$ и
элемента $x\in X$ обозначим
$d(x,F)$ расстояние от $x$ до $F$.
Определим множество
\begin{equation*}
\Delta(S)=\{x\in X\mid \liminf_{k\to\infty} d(x,X_{k})=0\}.
\end{equation*}

\begin{theorem}\label{T:SUFforRS}
Если замыкание линейной оболочки $\Delta(S)$ равно $X$, то
$S$ является ППСП в $X$.
\end{theorem}
\begin{proof}
Очевидно, для произвольной биекции
$\sigma:\mathbb{N}\to\mathbb{N}$
$\Delta(S_{\sigma})=\Delta(S)$. Поэтому достаточно доказать, что
произвольная система подпространств $S$, удовлетворяющая условию теоремы,
является ПСП в $X$.

Достаточно доказать, что произвольный
$x\in X,\|x\|< 1$ может быть представлен в виде
$x=\sum_{k=1}^{\infty}x_{k}$, где
$x_{k}\in X_{k}$. Итак, пусть
$x\in X,\|x\|<1$. Для
$k=1,2,\ldots$ проделаем следующие операции.

Пусть для некоторого
$k\geqslant 1$ у нас уже определены натуральные числа
$N(l,i,j)$ для
$l=1,\ldots,k-1$,
$i=1,\ldots,r(l)$,
$j=1,\ldots,N(l)$ и
элементы $y_{l,i,j}\in X_{N(l,i,j)}$, причём
\begin{equation*}
\|x-\sum_{l=1}^{k-1}\sum_{i=1}^{r(l)}\sum_{j=1}^{N(l)}y_{l,i,j}\|<2^{-(k-1)}
\end{equation*}
(для $k=1$ ничего не определено).
Обозначим
$z=x-\sum_{l=1}^{k-1}\sum_{i=1}^{r(l)}\sum_{j=1}^{N(l)}y_{l,i,j}$, тогда
$\|z\|<2^{-(k-1)}$ (для $k=1$ определяем $z=x$).
Существует
$N(k)\in\mathbb{N}$ и элементы
$x_{k,j}\in\Delta(S),j=1,\ldots,N(k)$, такие, что
\begin{equation}\label{E:sufk1}
\|z-\sum_{j=1}^{N(k)}x_{k,j}\|<2^{-k}.
\end{equation}
Выберем
$r(k)\in\mathbb{N}$ так, чтобы
\begin{equation}\label{E:sufk2}
\|x_{k,j}/r(k)\|<2^{-k}(N(k))^{-1},~
j=1,\ldots,N(k).
\end{equation}
Неравенство \eqref{E:sufk1} перепишем в виде
\begin{equation}\label{E:sufk3}
\|z-\Bigl(\underbrace{\dfrac{x_{k,1}+\ldots+x_{k,N(k)}}{r(k)}+\ldots+\dfrac{x_{k,1}+\ldots+x_{k,N(k)}}{r(k)}}_{r(k)}\Bigl)\|<2^{-k}.
\end{equation}
Из $\|z\|<2^{-(k-1)}$ и
\eqref{E:sufk1} следует, что
$\|\sum_{j=1}^{N(k)}x_{k,j}\|<2^{-(k-1)}+2^{-k}$. Поэтому для произвольного
$a=1,\ldots,r(k)-1$ имеем:
\begin{equation}\label{E:sufk4}
\|\underbrace{\dfrac{x_{k,1}+\ldots+x_{k,N(k)}}{r(k)}+\ldots+\dfrac{x_{k,1}+\ldots+x_{k,N(k)}}{r(k)}}_{a}\|<2^{-(k-1)}+2^{-k}
\end{equation}
Ясно, что
$x_{k,j}/r(k)\in\Delta(S)$ для
$j=1,\ldots,N(k)$.
Используя неравенства
\eqref{E:sufk2},\eqref{E:sufk3},\eqref{E:sufk4}
несложно видеть, что существуют натуральные числа
$N(k,i,j),i=1,\ldots,r(k);j=1,\ldots,N(k)$ и элементы
$y_{k,i,j}\in X_{N(k,i,j)}$, такие, что

$\textbf{(1)}~$
$N(k-1,r(k-1),N(k-1))<N(k,1,1)$ (для $k=1$ это условие отсутствует);
$N(k,i,j)<N(k,i',j')$ если
$i<i'$;
$N(k,i,j)<N(k,i,j')$ если
$j<j'$;

$\textbf{(2)}~$ $\|z-\sum_{i=1}^{r(k)}\sum_{j=1}^{N(k)}y_{k,i,j}\|<2^{-k}$, т.е.
\begin{equation*}
\|x-\sum_{l=1}^{k}\sum_{i=1}^{r(l)}\sum_{j=1}^{N(l)}y_{l,i,j}\|<2^{-k};
\end{equation*}

$\textbf{(3)}~$
для произвольного
$a=1,\ldots,r(k)-1$
\begin{equation*}
\|\sum_{i=1}^{a}\sum_{j=1}^{N(k)}y_{k,i,j}\|<2^{-(k-1)}+2^{-k};
\end{equation*}

$\textbf{(4)}~$ $\|y_{k,i,j}\|<2^{-k}(N(k))^{-1}$ для всех $i=1,2,\ldots,r(k),j=1,2,\ldots,N(k)$.

Действительно, сначала выберем
$N(k,1,1)$ и
$y(k,1,1)$ (достаточно близко к $x_{k,1}/r(k)$), затем
$N(k,1,2)$ и
$y_{k,1,2}$ (достаточно близко к $x_{k,2}/r(k)$),
$\ldots$, затем
$N(k,1,N(k))$ и
$y_{k,1,N(k)}$ (достаточно близко к $x_{k,N(k)}/r(k)$),
затем переходим к выбору <<второй группы>>:
$N(k,2,1)$ и
$y(k,2,1)$ (достаточно близко к $x_{k,1}/r(k)$) и т.д.

Выполнив такие операции, получим набор элементов
$y_{l,i,j}\in X_{N(l,i,j)}$, где
$l=1,2,\ldots$,
$i=1,\ldots,r(l)$,
$j=1,2\ldots,N(l)$. По построению
$N(l,i,j)<N(l',i',j')$ если
$l<l'$;
$N(l,i,j)<N(l,i',j')$ если
$i<i'$;
$N(l,i,j)<N(l,i,j')$ если
$j<j'$.
Покажем, что
\begin{align}\label{E:sufconv}
&x=y_{1,1,1}+y_{1,1,2}+\ldots+y_{1,1,N(1)}+y_{1,2,1}+\ldots+y_{1,2,N(1)}+\ldots+\\
&+y_{1,r(1),1}+\ldots+y_{1,r(1),N(1)}+y_{2,1,1}+\ldots+y_{2,1,N(2)}+y_{2,2,1}+\ldots
\notag
\end{align}
Для этого рассмотрим сумму первых
$s$ членов ряда в правой части \eqref{E:sufconv}.
Представим $s$ в виде
$s=r(1)N(1)+\ldots+r(k-1)N(k-1)+aN(k)+b$, где
$0\leqslant a\leqslant r(k)-1$,
$0\leqslant b\leqslant N(k)-1$.
Оценим
\begin{equation*}
\delta_{s}=\|x-\sum_{l=1}^{k-1}\sum_{i=1}^{r(l)}\sum_{j=1}^{N(l)}y_{l,i,j}-\sum_{i=1}^{a}\sum_{j=1}^{N(k)}y_{k,i,j}
-\sum_{j=1}^{b}y_{k,a+1,j}\|.
\end{equation*}
Из построения $y_{l,i,j}$ следуют оценки
\begin{equation}\label{E:sufconv1}
\|x-\sum_{l=1}^{k-1}\sum_{i=1}^{r(l)}\sum_{j=1}^{N(l)}y_{l,i,j}\|<2^{-(k-1)},
\end{equation}
\begin{equation}\label{E:sufconv2}
\|\sum_{i=1}^{a}\sum_{j=1}^{N(k)}y_{k,i,j}\|<2^{-(k-1)}+2^{-k},
\end{equation}
\begin{equation}\label{E:sufconv3}
\|\sum_{j=1}^{b}y_{k,a+1,j}\|< N(k)2^{-k}(N(k))^{-1}=2^{-k}.
\end{equation}
Из неравенств \eqref{E:sufconv1},\eqref{E:sufconv2},\eqref{E:sufconv3}
следует
$\delta_{s}<6\cdot 2^{-k}\to 0,\,s\to\infty$.
Поэтому справедливо равенство \eqref{E:sufconv}.
Дополняя его в нужных местах нулями, получим искомое разложение
$x=\sum_{k=1}^{\infty}x_{k}$,
$x_{k}\in X_{k}$.
\end{proof}

Приведём пример системы
$S$, для которой выполнено условие теоремы
\ref{T:SUFforRS}.
Для двух подпространств $Y,Z$ пространства $X$ определим
\begin{equation}\label{E:rho0}
\rho_{0}(Y,Z)=\sup\{d(y,Z)\mid y\in Y,\|y\|=1\}.
\end{equation}

\begin{example}
Пусть система подпространств
$Y_{j},j\in\Lambda$ ($\Lambda$ "--- некоторое множество индексов) такова, что
замыкание линейной оболочки $Y_{j},j\in\Lambda$ равно
$X$. Пусть система
$S=(X;X_{k},k\geqslant 1)$ такова, что для каждого
$j\in\Lambda$ существует последовательность натуральных чисел
$k(1)<k(2)<\ldots$, для которой
$\lim_{n\to\infty}\rho_{0}(Y_{j},X_{k(n)})=0$.
Тогда для каждого
$j\in\Lambda$
$Y_{j}\subset\Delta(S)$. Поэтому
$S$ удовлетворяет условию теоремы \ref{T:SUFforRS}, а значит, является ПСП в
$X$.
\end{example}

\begin{remark}
Если система
$S$ удовлетворяет условию теоремы
\ref{T:SUFforRS}, то для каждого
$n\in\mathbb{N}$ система
$S_{(\geqslant n)}=(X;X_{k+n-1},k\geqslant 1)$ также удовлетворяет условию теоремы
\ref{T:SUFforRS}, а поэтому является ПСП в
$X$. Поэтому для каждого
$n\in\mathbb{N}$ замыкание линейной оболочки
подпространств
$X_{k},k\geqslant n$ равно $X$.
Последнее условие не является достаточным для того, чтобы
$S$ была ПСП в $X$.
Это показывает следующий пример (который относится к математическому фольклору).

Пусть
$X=L_{p}([0,1],dx)$ ($p\in[1,\infty)$), подпространство
$X_{k}$ порождено
$x^{k},k\geqslant 0$
(нам удобнее нумеровать подпространства числами $0,1,2,\ldots$, а не
$1,2,\ldots$).
Тогда для всех
$n\geqslant 0$ замыкание линейной оболочки
$X_{k},k\geqslant n$ равно $X$, но
$S$ не есть ПСП в $X$.
Действительно, если
$f(x)=\sum_{k=0}^{\infty}a_{k}x^{k}$ (сходимость по норме пространства $X$), то
$\|a_{k}x^{k}\|\to 0,\,k\to\infty$, а
поэтому для всех достаточно больших
$k~$
$|a_{k}|\leqslant(kp+1)^{1/p}$. Поэтому
$f(x)\in C^{\infty}([0,1))$.
\end{remark}

\subsection{Об одном достаточном условии для того, чтобы $x\in X$ допускал разложение по системе подпространств $S$ в
случае гильбертова пространства $X$}

Пусть
$X$ "--- гильбертово пространство. Для подпространства
$Y\subset X$ обозначим $P_{Y}$ ортопроектор на $Y$. Пусть
$S=(X;X_{k},k\geqslant 1)$ "--- система подпространств $X$. Рассмотрим произвольный
$x\in X$ и попробуем разложить его по системе подпространств
$S$, т.е. представить в виде
$x=\sum_{k=1}^{\infty}x_{k}$, где
$x_{k}\in X_{k},k\geqslant 1$.

Естественно попробовать определить искомое разложение следующим образом:
\begin{align*}
&x=P_{X_{1}}x+(I-P_{X_{1}})x=P_{X_{1}}x+P_{X_{2}}(I-P_{X_{1}})x+(I-P_{X_{2}})(I-P_{X_{1}})x=\ldots=\\
&=\sum_{k=1}^{n}P_{X_{k}}(I-P_{X_{k-1}})\ldots(I-P_{X_{1}})x+(I-P_{X_{n}})\ldots(I-P_{X_{1}})x=\ldots
\end{align*}
Определим операторы
$E_{0}=I$,
$E_{n}=(I-P_{X_{n}})\ldots(I-P_{X_{1}}),n\geqslant 1$.
Обозначим
$x_{n}=P_{X_{n}}E_{n-1}x,n\geqslant 1$, тогда
$x=\sum_{k=1}^{n}x_{k}+E_{n}x,n\geqslant 1$.
Таким образом, получаем следующее утверждение.

\begin{proposition}
Если $E_{n}x\to 0,\,n\to\infty$, то
$x=\sum_{k=1}^{\infty}x_{k}$.
\end{proposition}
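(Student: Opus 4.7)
The plan is essentially to read off the conclusion from the identity
\begin{equation*}
x = \sum_{k=1}^{n} x_k + E_n x, \qquad n \geq 1,
\end{equation*}
which is already derived in the paragraph preceding the statement of the proposition (and which can, if desired, be verified by a trivial induction on $n$ using the recursion $E_n = (I - P_{X_n}) E_{n-1}$ together with the telescoping identity $E_{n-1} - E_n = P_{X_n} E_{n-1}$ applied to $x$). So the whole proof reduces to taking a limit in $n$.

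More explicitly, the first thing I would do is observe that by construction $x_k = P_{X_k} E_{k-1} x \in X_k$ for every $k \geq 1$, since $P_{X_k}$ is the orthogonal projection onto $X_k$. Thus the formal series $\sum_{k=1}^{\infty} x_k$ has the correct form required by the definition of a representation by the system $S$.

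Next I would rewrite the displayed identity as
\begin{equation*}
x - \sum_{k=1}^{n} x_k = E_n x,
\end{equation*}
take norms, and apply the hypothesis $\|E_n x\| \to 0$ as $n \to \infty$. This gives $\bigl\|x - \sum_{k=1}^{n} x_k\bigr\| \to 0$, which is exactly the statement that the series $\sum_{k=1}^{\infty} x_k$ converges to $x$ in $X$, with $x_k \in X_k$ as required.

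There is no real obstacle here; the entire content of the proposition is contained in the identity displayed above, which was already established before the statement. The only place where one could conceivably spend an extra sentence is in checking the identity $x = \sum_{k=1}^{n} x_k + E_n x$, but the author has in effect exhibited this by the telescoping computation $x = P_{X_1} x + (I - P_{X_1}) x = \ldots$ shown just above.
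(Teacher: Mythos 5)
Your proof is correct and coincides with the paper's own argument: the proposition is stated there as an immediate consequence of the identity $x=\sum_{k=1}^{n}x_{k}+E_{n}x$, and your rearrangement plus passage to the limit is exactly that reasoning made explicit.
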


Таким образом, если
последовательность операторов
$E_{n}$ сходится к $0$ сильно при $n\to\infty$, то система подпространств
$S$ является ПСП в
$X$. Однако вопрос о сильной сходимости
$E_{n}$ к $0$ может оказаться очень сложным.
Рассмотрим следующий пример.
Пусть $N\in\mathbb{N}$,
$H_{k},1\leqslant k\leqslant N$ "--- подпространства
$X$, причём
$\bigcap_{k=1}^{N}H_{k}=0$.
Пусть отображение
$i(\cdot):\mathbb{N}\to\{1,2,\ldots,N\}$ таково, что
$i(k+1)\neq i(k),k\geqslant 1$ и для каждого
$m\in\{1,2,\ldots,N\}$ существует бесконечно много
$k$, для которых
$i(k)=m$.
Определим
$X_{k}=H_{i(k)}^{\bot},k\geqslant 1$.
Тогда
$E_{n}=P_{H_{i(n)}}\ldots P_{H_{i(1)}},n\geqslant 1$.
Известно, что $E_{n}$ сходится к $0$ слабо при $n\to\infty$ (см.\cite{AmAn}).
Вопрос о сильной сходимости
$E_{n}$ к $0$ при
$n\to\infty$ называется проблемой Гальперина и является
чрезвычайно сложным (см., например, \cite{Bauschke}).
В то же время
из теоремы \ref{T:SUFforRS} следует, что
$S$ является ПСП в
$X$.

\subsection{Устойчивость ПСП в банаховых пространствах}

Пусть
$X$ "--- банахово пространство,
$S=(X;X_{k},k\geqslant 1)$ "--- система подпространств $X$.
Мы покажем, что если подпространства
$X_{k},\widetilde{X}_{k}$ достаточно <<близки>>, $k\geqslant 1$, то система подпространств
$\widetilde{S}=(X;\widetilde{X}_{k},k\geqslant 1)$ также является ПСП в
$X$. За меру <<близости>> подпространств выберем величину
$\rho_{0}(X_{k},\widetilde{X}_{k})$, определённую формулой
\eqref{E:rho0}. Для доказательства соответствующих результатов мы
обобщим результаты параграфов 2,3 работы \cite{Slepchenko}, в которой рассматриваются системы одномерных подпространств,
на произвольные системы подпространств.

Введём необходимые определения (обобщающие определения параграфа 2 работы \cite{Slepchenko}). Для набора
$P=(x_{1},\ldots,x_{n})$, где
$x_{k}\in X_{k},1\leqslant k\leqslant n,$ определим
$\Sigma(P)=\sum_{k=1}^{n}x_{k}$, а также
\begin{equation*}
\Theta_{S}(P)=\max_{1\leqslant k\leqslant n}\|\sum_{j=1}^{k}x_{j}\|.
\end{equation*}
Для
$x\in X,\varepsilon>0$ определим
\begin{equation*}
\Theta_{S}(x,\varepsilon)=\inf\{\Theta_{S}(P)\mid \|\Sigma(P)-x\|\leqslant\varepsilon\}.
\end{equation*}
(Мы считаем, что $\inf(\varnothing)=\infty$.)
Для
$x\in X$ определим
\begin{equation*}
\Theta_{S}^{*}(x)=\sup\{\Theta_{S}(x,\varepsilon)\mid\varepsilon>0\}=\lim_{\varepsilon\to 0+}\Theta_{S}(x,\varepsilon).
\end{equation*}
Наконец, определим
\begin{equation*}
\overline{\Theta}_{S}=\sup\{\Theta_{S}^{*}(x)\mid \|x\|\leqslant 1\}.
\end{equation*}

Следующие две леммы и теорема доказываются точно так же, как леммы 1,2 и теорема 1 в
\cite{Slepchenko}.

\begin{lemma}
Если
$\Theta_{S}^{*}(x)<\infty$ для произвольного
$x\in X$, то
$\Theta_{S}^{*}(x)$ "--- норма на
$X$, эквивалентная
$\|\cdot\|$.
\end{lemma}

\begin{lemma}
Если для некоторых
$\alpha\in(0,1),B>0$ выполнено
$\Theta_{S}(x,\alpha\|x\|)\leqslant B\|x\|,x\in X$, то
$\Theta_{S}^{*}(x)\leqslant\frac{B}{1-\alpha}\|x\|,x\in X$.
\end{lemma}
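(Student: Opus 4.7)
The plan is to bootstrap the hypothesis by a geometric iteration. Given $x\in X$ with $\|x\|>0$, apply the assumption to obtain a tuple $P_{1}=(x^{(1)}_{1},\ldots,x^{(1)}_{n_{1}})$, $x^{(1)}_{k}\in X_{k}$, with $\Theta_{S}(P_{1})\leqslant B\|x\|$ and $\|\Sigma(P_{1})-x\|\leqslant\alpha\|x\|$. Set $x^{(1)}=x-\Sigma(P_{1})$, so $\|x^{(1)}\|\leqslant\alpha\|x\|$, and apply the hypothesis to $x^{(1)}$ to produce $P_{2}$ with $\Theta_{S}(P_{2})\leqslant B\|x^{(1)}\|\leqslant B\alpha\|x\|$ and residual $x^{(2)}=x^{(1)}-\Sigma(P_{2})$ satisfying $\|x^{(2)}\|\leqslant\alpha^{2}\|x\|$. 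Iterating yields tuples $P_{i}=(x^{(i)}_{1},\ldots,x^{(i)}_{n_{i}})$ with $\Theta_{S}(P_{i})\leqslant B\alpha^{i-1}\|x\|$ and residuals satisfying $\|x^{(i)}\|\leqslant\alpha^{i}\|x\|$.

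Next, for each $N$ I would combine $P_{1},\ldots,P_{N}$ into one tuple $P$ by componentwise addition, padding the shorter tuples with zeros: the $k$-th coordinate of $P$ is $\sum_{i=1}^{N}x^{(i)}_{k}$, where $x^{(i)}_{k}:=0$ if $P_{i}$ has length less than $k$. This coordinate lies in $X_{k}$ since each summand does. Telescoping gives $\Sigma(P)=\sum_{i=1}^{N}\Sigma(P_{i})=x-x^{(N)}$, hence $\|\Sigma(P)-x\|\leqslant\alpha^{N}\|x\|$. For any $k$ the $k$-th partial sum of $P$ equals $\sum_{i=1}^{N}s^{(i)}_{k}$, where $s^{(i)}_{k}$ is the $k$-th partial sum of $P_{i}$ (interpreted as $\Sigma(P_{i})$ once $k>n_{i}$); in either case $\|s^{(i)}_{k}\|\leqslant\Theta_{S}(P_{i})\leqslant B\alpha^{i-1}\|x\|$, and summing the geometric series gives $\Theta_{S}(P)\leqslant\frac{B}{1-\alpha}\|x\|$.

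Given $\varepsilon>0$, choose $N$ with $\alpha^{N}\|x\|\leqslant\varepsilon$; then $P$ is admissible in the definition of $\Theta_{S}(x,\varepsilon)$, so $\Theta_{S}(x,\varepsilon)\leqslant\frac{B}{1-\alpha}\|x\|$. Taking $\sup_{\varepsilon>0}$ yields $\Theta_{S}^{*}(x)\leqslant\frac{B}{1-\alpha}\|x\|$, while the case $x=0$ is handled by the zero tuple. The only mildly delicate point is the coordinatewise concatenation: one must check that when $k$ exceeds the length of some $P_{i}$, the truncated partial sum equals $\Sigma(P_{i})$ and is still controlled by $\Theta_{S}(P_{i})$, which is automatic because $\Sigma(P_{i})$ is itself among the partial sums used to define $\Theta_{S}(P_{i})$. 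Everything else is a routine geometric-series estimate, so I do not expect a real obstacle.
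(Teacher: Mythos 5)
Your proof is correct and is essentially the intended argument: the paper gives no proof of this lemma at all, referring instead to the analogous Lemma 2 of \cite{Slepchenko}, whose proof is exactly your geometric iteration on the residuals together with the coordinatewise (not concatenated) merging of the tuples, which is indeed the one point that needs care in the subspace setting. The only cosmetic repair is that $\Theta_{S}(x^{(i-1)},\alpha\|x^{(i-1)}\|)$ is an infimum and need not be attained, so you should choose $P_{i}$ with $\Theta_{S}(P_{i})\leqslant B\alpha^{i-1}\|x\|+\eta 2^{-i}$ for an arbitrary $\eta>0$ and let $\eta\to 0$ at the end.
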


\begin{theorem}
Следующие утверждения эквивалентны:
\begin{enumerate}
\item
$S$ является ПСП в $X$,
\item
существуют
$\alpha\in(0,1),B>0$, такие, что для произвольного
$x\in X$\\
$\Theta_{S}(x,\alpha\|x\|)\leqslant B\|x\|$,
\item
$\Theta_{S}^{*}(x)<\infty$ для произвольного $x\in X$,
\item
$\overline{\Theta}_{S}<\infty$.
\end{enumerate}
\end{theorem}

Теперь установим теорему об устойчивости ПСП в $X$.

\begin{theorem}
Пусть
$S$ "--- ПСП в $X$. Если система подпространств
$\widetilde{S}=(X;\widetilde{X}_{k},k\geqslant 1)$ такова, что
$\sum_{k=1}^{\infty}\rho_{0}(X_{k},\widetilde{X}_{k})<(2\overline{\Theta}_{S})^{-1}$, то
$\widetilde{S}$ является ПСП в
$X$.
\end{theorem}
\begin{proof}
Доказательство аналогично доказательству теоремы 2 в
\cite{Slepchenko}.
Пусть
$x\in X,x\neq 0$. Пусть
$\varepsilon>0$. Существует
$P=(x_{1},\ldots,x_{n}),x_{k}\in X_{k}$, такой, что
\begin{equation*}
\|x-\Sigma(P)\|\leqslant\varepsilon,\,
\Theta_{S}(P)\leqslant\overline{\Theta}_{S}(1+\varepsilon)\|x\|.
\end{equation*}
Тогда
$\|x_{k}\|\leqslant 2\overline{\Theta}_{S}(1+\varepsilon)\|x\|,1\leqslant k\leqslant n.$
Обозначим $d_{k}=\rho_{0}(X_{k},\widetilde{X}_{k}),k\geqslant 1$.
Для произвольного
$k,1\leqslant k\leqslant n,$ существует
$\widetilde{x}_{k}\in\widetilde{X}_{k}$, для которого
$\|x_{k}-\widetilde{x}_{k}\|\leqslant d_{k}(1+\varepsilon)\|x_{k}\|$.
Определим
$\widetilde{P}=(\widetilde{x}_{1},\ldots,\widetilde{x}_{n})$. Тогда
\begin{equation*}
\|x-\Sigma(\widetilde{P})\|\leqslant\|x-\Sigma(P)\|+\|\Sigma(P)-\Sigma(\widetilde{P})\|\leqslant
\varepsilon+\sum_{k=1}^{n}d_{k}(1+\varepsilon)\|x_{k}\|\leqslant \varepsilon+2\overline{\Theta}_{S}(1+\varepsilon)^{2}\|x\|\sum_{k=1}^{n}d_{k}.
\end{equation*}
Для произвольного
$m,1\leqslant m\leqslant n,$ имеем
\begin{equation*}
\|\sum_{k=1}^{m}\widetilde{x}_{k}\|\leqslant\|\sum_{k=1}^{m}x_{k}\|+\|\sum_{k=1}^{m}(\widetilde{x}_{k}-x_{k})\|\leqslant
\overline{\Theta}_{S}(1+\varepsilon)\|x\|+2\overline{\Theta}_{S}(1+\varepsilon)^{2}\|x\|\sum_{k=1}^{m}d_{k}.
\end{equation*}

Зафиксируем
$\alpha\in(2\overline{\Theta}_{S}\sum_{k=1}^{\infty}d_{k},1)$,
$B>\overline{\Theta}_{S}+2\overline{\Theta}_{S}\sum_{k=1}^{\infty}d_{k}$.
При достаточно малом
$\varepsilon>0$ из доказаных неравенств имеем
$\Theta_{\widetilde{S}}(x,\alpha\|x\|)\leqslant B\|x\|$.
Поэтому
$\widetilde{S}$ является ПСП в $X$.
\end{proof}

\section{Абсолютно представляющие системы подпространств в банаховых пространствах}\label{S:ARSofSub}

\subsection{Критерии АПСП}

Пусть
$X$ "--- банахово пространство,
$S=(X;X_{k},k\geqslant 1)$ "--- система подпространств $X$.
Определим
$l_{1}(X_{1},X_{2},\ldots)$ как множество последовательностей
$\xi=(x_{1},x_{2},\ldots),x_{k}\in X_{k}$, для которых
$\|\xi\|=\sum_{k=1}^{\infty}\|x_{k}\|<\infty$. Ясно, что
$l_{1}(X_{1},X_{2},\ldots)$ "--- банахово пространство. Определим оператор
$A:l_{1}(X_{1},X_{2},\ldots)\to X$ равенством
$A(x_{1},x_{2},\ldots)=\sum_{k=1}^{\infty}x_{k}$.
Система
$S$ является АПСП в $X$ тогда и только тогда, когда
$\mathrm{Im}(A)=X$. Хорошо известно, что это равносильно тому, что
$A^{*}:X^{*}\to (l_{1}(X_{1},X_{2},\ldots))^{*}$ является изоморфным вложением, т.е.
для некоторого
$\varepsilon>0$
$\|A^{*}\varphi\|\geqslant\varepsilon\|\varphi\|,\varphi\in X^{*}$.
Легко видеть, что
$(l_{1}(X_{1},X_{2},\ldots))^{*}=l_{\infty}(X_{1}^{*},X_{2}^{*},\ldots)$ "--- множество всех последовательностей
$\eta=(\varphi_{1},\varphi_{2},\ldots),\varphi_{k}\in X_{k}^{*}$, для которых
$\|\eta\|=\sup_{k}\|\varphi_{k}\|<\infty$. При этом действие
$\eta(\xi)=\sum_{k=1}^{\infty}\varphi_{k}(x_{k})$.
Легко видеть, что
$A^{*}\varphi=(\varphi^{(S,1)},\varphi^{(S,2)},\ldots)$, где
$\varphi^{(S,k)}$ обозначено сужение $\varphi$ на
$X_{k}$. Таким образом, получаем следующую теорему.

\begin{theorem}\label{T:ARSofSubs}
$S$ является АПСП в $X$ тогда и только тогда, когда существует
$\varepsilon>0$ такое, что
\begin{equation}\label{E:ARSofSubs}
\sup_{k}\|\varphi^{(S,k)}\|\geqslant\varepsilon\|\varphi\|,\varphi\in X^{*}.
\end{equation}
\end{theorem}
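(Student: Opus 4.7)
The plan is to reduce the theorem to the standard duality between surjectivity of a bounded operator and a lower bound on its adjoint; all concrete pieces needed are already assembled in the paragraph preceding the statement.

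First I would invoke the classical fact that a bounded linear operator $T\colon E\to F$ between Banach spaces has $\mathrm{Im}(T)=F$ if and only if there exists $\varepsilon>0$ such that $\|T^{*}\varphi\|\geqslant\varepsilon\|\varphi\|$ for every $\varphi\in F^{*}$. The direction from surjectivity to the bound uses the open mapping theorem: one obtains $c>0$ with $cB_{F}\subset T(B_{E})$, whence $\|T^{*}\varphi\|\geqslant c\|\varphi\|$ by passing to suprema. The converse uses that such a bound forces $\mathrm{Im}(T)$ to be closed (from the lower estimate) and dense (by Hahn--Banach, since any functional vanishing on $\mathrm{Im}(T)$ lies in $\ker T^{*}=\{0\}$), hence $\mathrm{Im}(T)=F$.

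Applying this with $T=A\colon l_{1}(X_{1},X_{2},\ldots)\to X$, the definition of ARS (which, as observed in the paragraph above, amounts to $\mathrm{Im}(A)=X$) becomes equivalent to the existence of $\varepsilon>0$ with $\|A^{*}\varphi\|\geqslant\varepsilon\|\varphi\|$ for all $\varphi\in X^{*}$. I would then substitute the explicit formula $A^{*}\varphi=(\varphi^{(S,1)},\varphi^{(S,2)},\ldots)$ together with the isometric identification $(l_{1}(X_{1},X_{2},\ldots))^{*}=l_{\infty}(X_{1}^{*},X_{2}^{*},\ldots)$ (whose norm is the supremum of coordinate norms), obtaining $\|A^{*}\varphi\|=\sup_{k}\|\varphi^{(S,k)}\|$, which produces \eqref{E:ARSofSubs}.

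Since the computations are essentially bookkeeping once the duality lemma is in place, the only substantive ingredient is the open mapping theorem, used to derive the bounded-below condition from surjectivity. I do not anticipate any real obstacle; the main effort is simply to assemble the observations from the preamble into the stated equivalence.
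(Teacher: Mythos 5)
Your proposal is correct and follows essentially the same route as the paper: the paper likewise defines $A\colon l_{1}(X_{1},X_{2},\ldots)\to X$, observes that $S$ being an absolutely representing system means $\mathrm{Im}(A)=X$, cites the standard equivalence with $A^{*}$ being bounded below, and computes $A^{*}\varphi=(\varphi^{(S,1)},\varphi^{(S,2)},\ldots)$ in $l_{\infty}(X_{1}^{*},X_{2}^{*},\ldots)$. The only difference is that you spell out the proof of the duality fact, which the paper simply labels as well known.
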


Отметим, что теорему \ref{T:ARSofSubs} можно сформулировать следующим образом
(уменьшив $\varepsilon$):
$S$ является АПСП в $X$ тогда и только тогда, когда существует
$\varepsilon>0$ такое, что для произвольного
$\varphi\in X^{*},\|\varphi\|=1$ существуют
$k\geqslant 1,x\in X_{k},\|x\|=1$ такие, что
$|\varphi(x)|\geqslant\varepsilon$.

Понятие АПСП тесно связано с понятием абсолютно представляющего семейства (АПСм).
Напомним (см., например,
\cite{KorARSs}), что множество
$D\subset X$ называется АПСм в
$X$ если для произвольного
$x\in X$ существуют
$a_{j}\in\mathbb{K}$,
$x_{j}\in D$, такие, что
$x=\sum_{j=1}^{\infty}a_{j}x_{j}$ и
$\sum_{j=1}^{\infty}\|a_{j}x_{j}\|<\infty$.
АПСм в банаховых и гильбертовых пространствах изучались в
\cite{Vereng},\cite{Verrus},\cite{Shraifel}. Для системы подпространств
$S$ определим
$D(S)=\bigcup_{k=1}^{n}\{x\in X_{k},\|x\|=1\}$. Ясно, что
$S$ является АПСП в $X$ тогда и только тогда, когда
$D(S)$ является АПСм в $X$.

Приведенный критерий для АПСП (см. абзац после теоремы \ref{T:ARSofSubs}) можно получить из
следующего хорошо известного критерия для АПСм
(см., например, теорему 1 в \cite{KorARSs}, теорему 2.1 в
\cite{Vereng}), который доказывается аналогично теореме \ref{T:ARSofSubs}.
(Множество $D$ называется нормированным, если $\|x\|=1,x\in D$.)

\begin{theorem}\label{T:CritARS}
Пусть $D$ "--- нормированное множество в $X$.
$D$ является АПСм в $X$ тогда и только тогда, когда существует
$\varepsilon>0$ такое, что для произвольного
$\varphi\in X^{*},\|\varphi\|=1,$ существует
$x\in D$ такой, что
$|\varphi(x)|\geqslant\varepsilon$.
\end{theorem}

Далее мы докажем критерий для АПСм в
$X$, из которого непосредственно следует критерий для
АПСП в
$X$ (вместо
$D$ надо взять $D(S)$).
Следующая теорема обобщает теорему 3 в
\cite{Shraifel} и показывает, что в теореме \ref{T:CritARS} условие произвольности
$\varphi\in X^{*}$ можно ослабить.
(Будем говорить, что $D$ тотально в $X$, если замыкание линейной оболочки $D$ равно $X$).

\begin{theorem}\label{T:genShr}
Пусть $X$ "--- банахово пространство,
$Y$ "--- конечномерное подпространство $X$,
$D$ "--- тотальное в $X$ нормированное множество.
$D$ является АПСм в $X$ тогда и только тогда, когда существует
$\varepsilon>0$ такое, что для произвольного
$\varphi\in Y^{\bot},\|\varphi\|=1$ существует
$x\in D$ такой, что
$|\varphi(x)|\geqslant\varepsilon$.
\end{theorem}

Для доказательства нам нужна следующая лемма (которая относится к математическому фольклору).

\begin{lemma}
Пусть
$Y,Z$ "--- подпространства банахова пространства $X$. Если
$Y\cap Z=0$ и
$Y+Z=X$, то существует
$c>0$ такое, что для произвольных
$y\in Y,z\in Z$
$\|y+z\|\geqslant c(\|y\|+\|z\|)$.
\end{lemma}
\begin{proof}
Определим пространство
$Y\oplus Z$ как множество пар
$\xi=(y,z),y\in Y,z\in Z,$ с нормой
$\|\xi\|=\|y\|+\|z\|$. Ясно, что
$Y\oplus Z$ банахово. Определим оператор
$A:Y\oplus Z\to X$ равенством
$A(y,z)=y+z$. Тогда
$A$ ограничен,
$\ker A=0$,
$\mathrm{Im} A=X$. Поэтому
$A$ обратим, откуда непосредственно следует нужное утверждение.
\end{proof}

\begin{proof}[Доказательство теоремы \ref{T:genShr}]
Необходимость очевидна. Докажем достаточность. Предположим, что
$D$ "--- не АПСм в $X$.
Поскольку
$Y$ конечномерно, то оно дополняемо в
$X$, т.е. существует подпространство
$Z$ такое, что
$Y\cap Z=0$ и
$Y+Z=X$. Существует
$c_{1}>0$ такое, что
\begin{equation}\label{E:sumclosed}
\|y+z\|\geqslant c_{1}(\|y\|+\|z\|),\,y\in Y,z\in Z.
\end{equation}
Выберем в
$Y$ нормированный базис
$e_{1},\ldots,e_{m}$. Существует
$c_{2}>0$ такое, что для произвольных
$t_{1},\ldots,t_{m}\in\mathbb{K}$
$\|\sum_{k=1}^{m}t_{k}e_{k}\|\geqslant c_{2}\sum_{k=1}^{m}|t_{k}|$.

Рассмотрим произвольное
$\delta>0$. Поскольку $D$ тотально в $X$, существуют элементы
$f_{1},\ldots,f_{m}$ из линейной оболочки $D$, такие, что
$\|e_{k}-f_{k}\|<\delta,\|f_{k}\|=1$ для
$k=1,\ldots,m$. Ясно, что
$\{f_{1},\ldots,f_{m}\}\cup D$ не является АПСм в
$X$. Поэтому существует
$\varphi\in X^{*}$,
$\|\varphi\|=1$ такой, что
$|\varphi(f_{k})|\leqslant\delta$ для
$1\leqslant k\leqslant m$,
$|\varphi(x)|\leqslant\delta$ для
$x\in D$. Определим линейные функционалы
$\psi,\eta$ равенствами
$\psi(y+z)=\varphi(z),$
$\eta(y+z)=\varphi(y),$
$y\in Y,z\in Z$. Из неравенства
\eqref{E:sumclosed} следует, что
$\psi,\eta\in X^{*}$. Более того,
$\psi\in Y^{\bot}$,
$\eta\in Z^{\bot}$. 

Оценим
$\|\eta\|$. Пусть
$y\in Y$,
$y=\sum_{k=1}^{m}t_{k}e_{k}$. Тогда
\begin{equation*}
|\varphi(y)|\leqslant\sum_{k=1}^{m}|t_{k}||\varphi(e_{k})|\leqslant\sum_{k=1}^{m}|t_{k}|(|\varphi(e_{k}-f_{k})|+|\varphi(f_{k})|)
\leqslant 2\delta\sum_{k=1}^{m}|t_{k}|\leqslant 2\delta c_{2}^{-1}\|y\|.
\end{equation*}
Поэтому для произвольных $y\in Y,z\in Z$
\begin{equation*}
|\eta(y+z)|=|\varphi(y)|\leqslant 2\delta c_{2}^{-1}\|y\|\leqslant 2c_{1}^{-1}c_{2}^{-1}\delta\|y+z\|.
\end{equation*}
Положим
$c_{3}=2c_{1}^{-1}c_{2}^{-1}$, тогда
$\|\eta\|\leqslant c_{3}\delta$.

Поскольку
$\|\varphi\|=1$, то
$\|\psi\|\geqslant (1-c_{3}\delta)$. Для произвольного
$x\in D$
$|\psi(x)|\leqslant\delta(1+c_{3})$. Положим
$\widetilde{\psi}=\psi/\|\psi\|$. Тогда
$\widetilde{\psi}\in Y^{\bot},$
$\|\widetilde{\psi}\|=1$. Для произвольного
$x\in D$
$|\widetilde{\psi}(x)|\leqslant \delta(1+c_{3})/(1-c_{3}\delta)$.
При достаточно малых
$\delta$ получим противоречие.
\end{proof}

Рассмотрим АПСП в равномерно гладких пространствах
(АПСм в равномерно гладких пространствах изучались в \cite{Vereng},\cite{Verrus}).
Напомним определение равномерно гладкого пространства. Определим модуль гладкости пространства
$X$ равенством
\begin{equation*}
\rho(\tau)=\sup\{(\|x+y\|+\|x-y\|)/2 -1\mid \|x\|=1,\|y\|=\tau\},\,\tau>0.
\end{equation*}
$X$ называется равномерно гладким если
$\rho(\tau)/\tau\to 0$ при $\tau\to 0$.
Для нас равномерно гладкие пространства важны по следующей причине:
как мы увидим при доказательстве следующей теоремы, если
$S$ "--- АПСП в равномерно гладком $X$, то для каждого
$x\in X$ разложение
$x$ в абсолютно сходящийся ряд по системе подпространств
$S$ может быть получено простым <<естественным>> образом.

Будем говорить, что множество
$A$ является $\lambda$-сетью для множества $B$, если для произвольного
$x\in B$ существует $y\in A$ такой, что $\|x-y\|\leqslant\lambda$. Обозначим
$V_{X}=\{x\in X,\|x\|=1\}.$ Напомним, что для системы подпространств $S=(X;X_{k},k\geqslant 1)$ $D(S)=\bigcup_{k=1}^{\infty}V_{X_{k}}$.

\begin{theorem}\label{T:unifsmooth}
Пусть $X$ "--- равномерно гладкое банахово пространство. Тогда утверждения равносильны:
\begin{enumerate}
\item
$S=(X;X_{k},k\geqslant 1)$ является АПСП в $X$,
\item
существуют
$\tau,\lambda\in(0,1)$ такие, что
$\tau D(S)$ "--- $\lambda$-сеть для $V_{X}$,
\item
$\lambda_{S}=\sup_{\|x\|=1}\inf_{k\geqslant 1}d(x,X_{k})<1.$
\end{enumerate}
\end{theorem}
\begin{proof}
$(1)\Rightarrow (2).$
Для действительного пространства
$X$ нужное утверждение следует из теоремы 3.1 в
\cite{Vereng}. Для комплексного
$X$ рассмотрим
$X$ как пространство над $\mathbb{R}$ и из упомянутой теоремы получим нужное.

$(2)\Rightarrow (3).$ Очевидно.

$(3)\Rightarrow (1).$ Доказательство аналогично доказательству теоремы
3 в \cite{Verrus}. Фиксируем
$\lambda\in(\lambda_{S},1)$. Рассмотрим произвольный
$x\in X$. Существуют
$i(1)\in\mathbb{N}$,
$x_{1}\in X_{i(1)}$ такие, что
$\|x-x_{1}\|\leqslant\lambda\|x\|$. Определим
$y_{1}=x-x_{1}$, тогда
$\|y_{1}\|\leqslant\lambda\|x\|$ и
$x=x_{1}+y_{1}$. Далее проделаем аналогичную процедуру. Пусть мы имеем разложение
$x=x_{1}+\ldots+x_{k}+y_{k}$. Существуют
$i(k+1)\in\mathbb{N}$,
$x_{k+1}\in X_{i(k+1)}$ такие, что
$\|y_{k}-x_{k+1}\|\leqslant\lambda\|y_{k}\|$. Определим
$y_{k+1}=y_{k}-x_{k+1}$, тогда
$\|y_{k+1}\|\leqslant\lambda\|y_{k}\|$ и
$x=x_{1}+\ldots+x_{k+1}+y_{k+1}$. Индукцией по
$k$ легко установить, что
$\|y_{k}\|\leqslant\lambda^{k}\|x\|,k\geqslant 1$. Поэтому
\begin{equation}\label{E:absconvser}
x=\sum_{k=1}^{\infty}x_{k}.
\end{equation}
Ясно, что
$\|x_{k}\|\leqslant\lambda^{k-1}(1+\lambda)\|x\|,k\geqslant 1$, поэтому
$\sum_{k=1}^{\infty}\|x_{k}\|\leqslant((1+\lambda)/(1-\lambda))\|x\|.$
Для
$k\geqslant 1$ определим
$z_{k}=\sum_{j:i(j)=k}x_{j}$, тогда
$z_{k}\in X_{k}$. Из
\eqref{E:absconvser} имеем 
$x=\sum_{k=1}^{\infty}z_{k}$.
\end{proof}

\begin{remark}
В работе
\cite{Verrus} доказано, что каждая АПС (одномерных подпространств) в равномерно гладком
$X$ является
<<быстрой>> ПС
(см. определение 3 и теорему 3 в \cite{Verrus}).
Аналогичное утверждение верно для АПСП. Как следует из доказательства
теоремы
\ref{T:unifsmooth},
$(3)\Rightarrow(1)$, каждая АПСП $S$ в равномерно гладком
$X$ является 
<<быстрой>> ПСП (наше определение согласовано с определением 3 в \cite{Verrus}):
существуют
$C>0$ и $\lambda\in(0,1)$ такие, что для произвольного
$x\in X$ существует инъективное отображение
$k\mapsto n(k)$ и элементы
$y_{k}\in X_{n(k)}$ такие, что
$x=\sum_{k=1}^{\infty}y_{k}$ и
$\|y_{k}\|\leqslant C\lambda^{k}\|x\|,k\geqslant 1$.
\end{remark}

Рассмотрим теперь АПСП в гильбертовых пространствах; как мы увидим далее, критерий для АПСП в гильбертовых 
пространствах приобретает геометрическую наглядность (см. также теоремы 1,2 в 
\cite{Shraifel}). Итак, пусть
$X$ гильбертово. Тогда
$X^{*}$ можно отождествить с 
$X$:
$\varphi(\cdot)=(\cdot,\varphi)$.
$S$ является АПСП в $X$ тогда и только тогда, когда существует
$\varepsilon>0$ такое, что для произвольного
$\varphi\in V_{X}$ существует
$x\in D(S)$ такой, что
$|(x,\varphi)|\geqslant\varepsilon.$

\begin{theorem}
Пусть
$\tau>0,\varepsilon\in(0,1]$. Следующие условия равносильны:
\begin{enumerate}
\item
для произвольного $\varphi\in V_{X}$ существует $x\in D(S)$ такой, что $|(x,\varphi)|\geqslant\varepsilon$,
\item
$\tau D(S)$ является $\sqrt{1+\tau^{2}-2\tau\varepsilon}$-сетью для $V_{X}$.
\end{enumerate}
\end{theorem}
\begin{proof}
Для произвольных
$\varphi\in V_{X},x\in D(S)$ имеем
$\|\varphi-\tau x\|^{2}=1+\tau^{2}-2\tau\mathrm{Re}(x,\varphi).$
Из этого равенства очевидным образом следует нужное утверждение.
\end{proof} 

\begin{corollary}
Пусть
$\tau>0$. Система подпространств
$S$ является АПСП в
$X$ тогда и только тогда, когда существует
$\lambda\in(0,\sqrt{1+\tau^{2}})$ такое, что
$\tau D(S)$ является
$\lambda$-сетью для $V_{X}$.
\end{corollary}

\subsection{Об одном свойстве АПСП}

Перед тем, как сформулировать и доказать следующую теорему, напомним определение
$C$-выпуклого пространства и некоторые свойства
$C$-выпуклых пространств.

Пусть
$Y$ "--- банахово пространство над $\mathbb{R}$. Обозначим
$c_{0}(\mathbb{R})$ множество последовательностей
$\xi=(z_{1},z_{2},\ldots),z_{k}\in\mathbb{R}$, для которых
$z_{k}\to 0$ при $k\to\infty$, с нормой
$\|\xi\|=\sup_{k}|z_{k}|$.
$Y$ называется $C$-выпуклым, если
$c_{0}(\mathbb{R})$ не является финитно представимым в $Y$ (см., например, параграфы 5.1, 5.2 книги \cite{Kadets}
и библиографию в конце параграфа 5.2). Для натурального
$n$ определим
\begin{equation}\label{E:Cconv}
C(n,Y)=\inf\left\{\max\left\{\|\sum_{k=1}^{n}\alpha_{k}y_{k}\| \mid \alpha_{k}=\pm 1,1\leqslant k\leqslant n\right\} \mid
y_{k}\in Y,\|y_{k}\|\geqslant 1,1\leqslant k\leqslant n\right\}.
\end{equation}
В из лемм 5.2.1, 5.2.2 и теоремы 5.2.2 книги
\cite{Kadets} следует, что
$Y$
$C$-выпукло тогда и только тогда, когда
$C(n,Y)\to\infty$ при $n\to\infty$.

Пусть
$Y$ "--- банахово пространство над $\mathbb{C}$. Обозначим
$c_{0}(\mathbb{C})$ множество последовательностей
$\xi=(z_{1},z_{2},\ldots),z_{k}\in\mathbb{C}$, для которых
$z_{k}\to 0$ при $k\to\infty$, с нормой
$\|\xi\|=\sup_{k}|z_{k}|$.
$Y$ называется $C$-выпуклым, если
$c_{0}(\mathbb{C})$ не является финитно представимым в $Y$. Для натурального
$n$ определим
$C(n,Y)$ формулой \eqref{E:Cconv}; величину
$C_{\mathbb{C}}(n,Y)$ формулой
\eqref{E:Cconv}, только максимум берётся по
$|\alpha_{k}|=1,\alpha_{k}\in\mathbb{C}$.
Перенося леммы 5.2.1, 5.2.2 и теорему 5.2.2 книги
\cite{Kadets} на случай комплексных пространств, получим, что
$Y$
$C$-выпукло тогда и только тогда, когда $C_{\mathbb{C}}(n,Y)\to\infty$ при $n\to\infty$.
Поскольку для произвольных
$\alpha_{1},\ldots,\alpha_{n}\in\mathbb{C},|\alpha_{k}|\leqslant 1$ и произвольных
$y_{1},\ldots,y_{n}\in Y$
\begin{equation*}
\|\sum_{k=1}^{n}\alpha_{k}y_{k}\|\leqslant\|\sum_{k=1}^{n}\mathrm{Re}(\alpha_{k})y_{k}\|+\|\sum_{k=1}^{n}\mathrm{Im}(\alpha_{k})y_{k}\|
\leqslant 2\max_{\beta_{k}=\pm 1}\|\sum_{k=1}^{n}\beta_{k}y_{k}\|,
\end{equation*}
то $C_{\mathbb{C}}(n,Y)\leqslant 2C(n,Y)$, а поэтому
$Y$
$C$-выпукло тогда и только тогда, когда
$C(n,Y)\to\infty$ при
$n\to\infty$.

Для
$I\subset\mathbb{N}$ обозначим
$\min(I)$ наименьший элемент множества
$I$.

\begin{theorem}\label{T:ARSSCconv}
Предположим, что $X^{*}$ $C$-выпукло.
Если
$S$ "--- АПСП в $X$, то существует
$N_{0}$, такое, что для произвольного конечного
$I\subset\mathbb{N}$, удовлетворяющего
$\min(I)\geqslant N_{0}$, система подпространств
$X_{k},k\notin I$ является АПСП в
$X$.
\end{theorem}

Теорема
\ref{T:ARSSCconv} очевидным образом следует из следующей леммы.

\begin{lemma}
Пусть
$I_{1},I_{2},\ldots$ "--- подмножества $\mathbb{N}$,
$m\in\mathbb{N}$. Предположим, каждое натуральное
$n$ принадлежит не более чем
$m$ множествам $I_{j}$. Тогда для некоторого
$j$ система подпространств $X_{k},k\notin I_{j}$ является АПСП в
$X$.
\end{lemma}
\begin{proof}
Существует $\varepsilon>0$, для которого выполнено \eqref{E:ARSofSubs}.
Предположим, утверждение леммы неверно. Тогда для произвольного
$j$ существует
$\varphi_{j}\in X^{*},\|\varphi_{j}\|=1$, такой, что
$\|\varphi_{j}^{(S,k)}\|\leqslant 2^{-j}$ для всех
$k\notin I_{j}$. Рассмотрим произвольное
$n\in\mathbb{N}.$ Существуют
$\alpha_{1},\ldots,\alpha_{n}\in\{\pm 1\}$, для которых
$\|\sum_{j=1}^{n}\alpha_{j}\varphi_{j}\|\geqslant C(n,X^{*}).$
Определим
$\varphi=\sum_{j=1}^{n}\alpha_{j}\varphi_{j}$. Тогда
$\|\varphi\|\geqslant C(n,X^{*})$ и для произвольного
$k$
$\|\varphi^{(S,k)}\|\leqslant\sum_{j=1}^{n}\|\varphi_{j}^{(S,k)}\|\leqslant m+1$.
Из неравенства \eqref{E:ARSofSubs} следует
$m+1\geqslant\varepsilon C(n,X^{*})$, что, в силу произвольности
$n$, противоречит
$C$-выпуклости
$X^{*}$.
\end{proof}

\subsection{Устойчивость АПСП}

\begin{theorem}
Если $S$ является АПСП в $X$, то существует
$\delta>0$ такое, что произвольная система
$\widetilde{S}=(X;\widetilde{X}_{k},k\geqslant 1)$, удовлетворяющая
$\sup_{k}\rho_{0}(X_{k},\widetilde{X}_{k})<\delta,$ является
АПСП в $X$.
\end{theorem}
\begin{proof}
Пусть
$\varepsilon>0$ такое, что выполнено неравенство
\eqref{E:ARSofSubs}. Покажем, что если для системы подпространств
$\widetilde{S}$
$d=\sup_{k}\rho_{0}(X_{k},\widetilde{X}_{k})<\varepsilon$, то
$\widetilde{S}$ является АПСП в
$X$. Выберем
$\varepsilon_{1}<\varepsilon,d_{1}>d$, причём
$d_{1}<\varepsilon_{1}$. Рассмотрим произвольный
$\varphi\in X^{*},\|\varphi\|=1.$ Существует
$k\in\mathbb{N}$ и
$x\in X_{k},\|x\|=1$, такие, что
$|\varphi(x)|\geqslant\varepsilon_{1}$. Существует
$\widetilde{x}\in\widetilde{X}_{k}$, для которого
$\|x-\widetilde{x}\|\leqslant d_{1}$. Имеем
\begin{equation*}
|\varphi(\widetilde{x})|\geqslant|\varphi(x)|-|\varphi(x-\widetilde{x})|\geqslant(\varepsilon_{1}-d_{1}),\,
\|\widetilde{x}\|\leqslant(1+d_{1}),
\end{equation*}
откуда
$\|\varphi^{(\widetilde{S},k)}\|\geqslant(\varepsilon_{1}-d_{1})/(1+d_{1})$.
Поэтому
$\widetilde{S}$ "--- АПСП в
$X$.
\end{proof}

\end{document}